\newcommand{\GF}{{\mathbb F}}
\newcommand{\Aut}{{\rm Aut}}
\newtheorem{Thm}{Theorem}[section]
\newtheorem{Lem}[Thm]{Lemma}
\theoremstyle{definition}
\newtheorem{Def}[Thm]{Definition}
\begin{document}
\title{An alternative construction of the $G_2(2)$-hexagon}
\author{Koichi Inoue \thanks{E-mail: k-inoue@teac.paz.ac.jp} \\ \footnotesize{Gunma Paz University, 
Gunma-ken 370-0006, Japan.}}
\date{}
\maketitle

\begin{flushright}
\tiny{Last modified: 2025/01/04}
\end{flushright}

\paragraph{Abstract.}
In this note, we give an alternative and explicit construction of the $G_2(2)$-hexagon from a $U_3(2)$-geometry.

\paragraph{Keywords:} generalized hexagon; unitary geometry

\setcounter{section}{+0}
\section{Introduction}
The paper~\cite{inoue} has given an alternative approach to construct the Chevalley group $G_2(2)$ with a certain 2-transitive permutation representation of degree 28 as well as a concrete tool to study the structure of it. In this note, using the quite elementary description, we shall give an alternative construction of the $G_2(2)$-hexagon (i.e., the split Cayley hexagon of order 2), which yields an alternative approach of $G_2(2)$ with two non-equivalent permutation representations of degree 63.

\section{Preliminaries}

A \textit{partial linear space} of \textit{order} $(s,t)$ is an incidence structure ($\mathcal{P}$,$\mathcal{L}$) where $\mathcal{P}$ is a set of \textit{points} and $\mathcal{L}$ is a set of $(s+1)$-subsets of $\mathcal{P}$ (called \textit{lines}) such that each point is contained in exactly $t+1$ lines and any two distinct points are contained in at most one line (and necessarily any two distinct lines meet in at most one point). If $s=t$, then we say \textit{order} $s$ instead of order $(s,s)$. Let $\bm{S}:=(\mathcal{P},\mathcal{L})$ be a partial linear space. 
The \textit{dual} of $\bm{S}$ is the partial linear space $\bm{S}^{D}:=(\mathcal{P}^{D},\mathcal{L}^{D})$ of order $(t,s)$ where $\mathcal{P}^{D}:=\mathcal{L}$ and $\mathcal{L}^{D}:=\left\{ \{l \in \mathcal{L} \mid p \in l\} \mid p \in \mathcal{P}\right\}$. Two lines of $\bm{S}$ are \textit{concurrent} if they meet in a point. The \textit{concurrency graph} of $\bm{S}$ is the graph whose vertex-set is $\mathcal{L}$, where two vertices are adjacent whenever they are concurrent. 

In order to describe generalized hexagons, we use graph-theoretic terminology as follows. A \textit{graph} is an incidence structure $(V,E)$ where $V$ is a set of \textit{vertices} and $E$ is a set of $2$-subsets of $V$ (called \textit{edges}). Take two vertices $x$ and $y$ (possibly $x=y$). We say that $x$ and $y$ are \textit{adjacent} for an edge $\{x,y\}$. A \textit{path} from $x$ to $y$ is a sequence of distinct vertices $(x=x_0,x_1,\dots,x_m=y)$ such that for $i \in \{1,2,\dots,m\}$, $x_{i-1}$ is adjacent to $x_i$, and its \textit{length} is the number of occurrences of edges. If $x$ and $y$ are connected by a path, the \textit{distance} from $x$ to $y$ is the length of the shortest path from $x$ to $y$, and denoted by $\textrm{d}(x,y)$. A graph is \textit{connected} if there is a path connecting each pair of vertices. The \textit{diameter} of connected graph is the maximum value of all the distances. A \textit{cycle} of \textit{length} $m$ is a closed path $(x_0,x_1,\dots,x_m=x_0)$, and the \textit{girth} of a graph is the minimum length of all the cycles.
  
A partial linear space $\bm{S}:=(\mathcal{P},\mathcal{L})$ of order $(s,t)$ is called a \textit{generalized hexagon} of \textit{order} $(s,t)$ if the incidence graph of $\bm{S}$ (i.e., the graph whose vertex-set is $\mathcal{P} \cup\mathcal{L}$ and edges are pairs $\{p,l\}$ such that $p \in \mathcal{P}, l \in \mathcal{L}$ and $p \in l$) has diameter 6 and girth 12, and we abbreviate it as a $\textrm{GH}(s,t)$. Note that the dual of a $\textrm{GH}(s,t)$ is a $\textrm{GH}(t,s)$.
For two generalized hexagons $\bm{S}$ and $\bm{T},$ we define an $\textit{isomorphism}$ $\sigma$ 
from $\bm{S}$ onto $\bm{T}$ to be a one-to-one mapping from the points of $\bm{S}$ onto the points of 
$\bm{T}$ and the lines of $\bm{S}$ onto the lines of $\bm{T}$ such that $p$ is in $l$ if and only if $p^{\sigma}$ is in $l^{\sigma}$ for each point $p$ and each line $l$ of $\bm{S},$ and then say that $\bm{S}$ and $\bm{T}$ are $\textit{isomorphic}$. An \textit{automorphism} of $\bm{S}$ is an isomorphism from $\bm{S}$ to itself. The set of all automorphisms of $\bm{S}$ forms a group, which is called the \textit{automorphism group} of $\bm{S}$ and denoted by $\Aut\bm{S}$.     
A general introduction for generalized hexagons is van Maldeghem~\cite{maldeghem}. See also Brouwer, Cohen and Neumaier~\cite{bcn}, Brouwer and van Maldeghem~\cite{bm} and Thas~\cite{thas}.

\section{The $G_2(2)$-hexagon}
In this only section, suppose temporarily that $V$ is a 6-dimensional vector space over the field $\GF_2$ with a non-degenerate alternating form, which we call an $S_6(2)$-$geometry$. It follows from Taylor~\cite[Exercise 8.1]{taylor} that the projective space $PG(V)$ contains 63 isotropic points (or vectors), 315 totally isotropic lines (abbreviated as \textit{t.i. lines}), 135 totally isotropic planes (abbreviated as \textit{t.i. planes}). Each t.i. lines $l$ contains 3 vectors, each t.i. plane $M$ contains 7 vectors and $|l \cap M|=0,1$ or $3$. An elementary counting argument shows that each t.i. line is exactly contained 3 t.i. planes. It is well known that the isotropic vectors and some of the t.i. lines form a GH(2,2) with the automorphism group $G_2(2)$. For more details on the construction, see \cite[Section 2.4]{maldeghem}. Although there are various ways to call this hexagon~\cite{maldeghem}, we call it the $G_2(2)$-$hexagon$ (or the \textit{Split Cayley hexagon} of $order$ 2). For the other constructions, see \cite[Section 4.1]{bm}, van Maldeghem~\cite{maldeghem3}, Wilson~\cite[Section 4.3]{wilson}, Payne~\cite{payne}, Cameron and Kantor~\cite{cameron-kantor}, Cossidente and King~\cite{cossidente-king} and Bamberg and Durante~\cite{bamberg-durante}. It is also known that there is a unique GH(2,2) up to isomorphism and duality. This result is due to Cohen and Tits~\cite{cohen-tits}.

\section{The Construction}

Let $V$ be a 3-dimensional vector space over the finite field $\GF_{4}$, and $h$ a non-degenerate  hermitian form. Then we consider the $U_3(2)$-geometry $(V,h)$ instead of the $S_6(2)$-geometry, as $V$ is considered as a 6-dimensional vector space over the field $\GF_2$ with the non-degenerate alternating form $s$, where $s(x,y):=h(x,y)+h(y,x)$ for all $x,y \in V$ (see \cite[Exercise 10.14]{taylor}), and in turn give an alternative and explicit construction of the $G_2(2)$-hexagon, which is not appearing elsewhere in the above references. The definitions and the notations of \cite[Preliminaries]{inoue} will be freely used. 
\vspace{1.0\baselineskip}

\noindent
Since $H_0 \cap \tilde{H_0}=\emptyset$ and $|H_{0}|=|\tilde{H_0}|=6$, the set $V_1:= \{x \in V \mid h(x,x)=1\}$ is divided into  
\begin{align*}
&W := \{x \in V_1 \mid [x] \in H_0\} \\
&\text{and} \\
&\tilde{W}:=\{x \in V_1 \mid [x] \in \tilde{H}_0\},
\end{align*}
each consisting of 18 vectors.
\begin{Def}
Define the design $\bm{S}$ with the point-set $\mathcal{P}:=V \setminus \{0\}$ of which the lines consist of three types as follows: 
\begin{itemize}
\item[(i)] $[a]$;
\item[(ii)] $L(a):=\left\{x \in W \mid h(a,x)=0 \ \text{and} \ a+x \in W\right\} \cup \{a\}$; 
\item[(iii)] $\tilde{L}(a) = \left\{x \in \tilde{W} \mid h(a,x)=0 \ \text{and} \ a+x \in \tilde{W} \right\} \cup \{a\}$;
\end{itemize}
where $a \in V_0$. 
\end{Def}
\vspace{1.0\baselineskip}

\begin{Lem}\label{lem:4.2}
$\bm{S}$ is a partial linear space of order $(2,2)$.
\end{Lem}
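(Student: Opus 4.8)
The plan is to verify in turn the three defining properties of a partial linear space of order $(2,2)$: every line is a $3$-element set; every point lies on exactly $3$ lines; two distinct points lie on at most one common line. Throughout I use that $\mathcal P=V\setminus\{0\}$ decomposes as $V_0\sqcup W\sqcup\tilde W$ with $|V_0|=27$ and $|W|=|\tilde W|=18$; that $[x]\subseteq V_0$ for $x\in V_0$, whereas $[x]\subseteq W$ or $[x]\subseteq\tilde W$ for $x\in V_1$; and that $H_0\sqcup\tilde H_0$ is the full set of $12$ non-isotropic points of $PG(V)\cong PG(2,4)$. From \cite{inoue} I will invoke two incidence properties of $H_0,\tilde H_0$ relative to the unitary polarity $\perp$: (F1) every tangent line — a line meeting the set of isotropic points in a single point — contains exactly two points of $H_0$, hence exactly two of $\tilde H_0$, among its four non-isotropic points; (F2) for every non-isotropic point $P\in H_0$ the secant line $P^\perp$ contains exactly two points of $H_0$, and likewise for $\tilde H_0$. (Both follow from the description in \cite{inoue}; concretely, the $12$ non-isotropic points partition into four self-polar triangles, each of $H_0,\tilde H_0$ being a union of two of them, and every isotropic point lies on exactly one side of each triangle.)

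For the first property, type (i) is clear since $[a]=\{a,\omega a,\omega^2 a\}$. For type (ii), observe that $h(a,x)=0$ is equivalent to $x\in a^\perp$, which is a tangent line (as $a$ is isotropic) carrying $[a]$ and four non-isotropic points, and that $x\in a^\perp\cap V_1$ forces $a+x\in a^\perp\cap V_1$; hence $x\mapsto a+x$ is a fixed-point-free involution of $L(a)\setminus\{a\}=\{x\in a^\perp\cap V_1:[x]\in H_0\text{ and }[a+x]\in H_0\}$. Writing the four non-isotropic points of $a^\perp$ as $[ta+b]$, $t\in\GF_4$, with $b$ a fixed non-isotropic vector of $a^\perp$, and putting $R=\{t\in\GF_4:[ta+b]\in H_0\}$, a direct count — the non-isotropic vectors of $a^\perp$ lying in $L(a)$ correspond to pairs $(t,c)$ with $t\in\GF_4$, $c\in\GF_4\setminus\{0\}$ and $t,t+c\in R$, and $\sum_{c\neq0}|R\cap(R+c)|=|R|^2-|R|$ — gives $|L(a)|=1+|R|(|R|-1)$; by (F1) we have $|R|=2$, so $|L(a)|=3$, and since passing to $\tilde H_0$ replaces $R$ by its complement, again of size $2$, also $|\tilde L(a)|=3$. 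As $a$ is the unique isotropic point on $L(a)$ and on $\tilde L(a)$, the maps $a\mapsto L(a)$ and $a\mapsto\tilde L(a)$ are injective, so $\bm{S}$ has exactly $9+27+27=63$ lines.

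For the second property, take first $a\in V_0$: then $a$ lies on $[a]$, $L(a)$ and $\tilde L(a)$, which are pairwise distinct (they meet pairwise only in $a$), and on no other line, since any further type-(i) line misses $a$ while $L(b)\setminus\{b\}\subseteq W$ and $\tilde L(b)\setminus\{b\}\subseteq\tilde W$ consist of non-isotropic vectors. Take next $x\in W$: it lies on no type-(i) or type-(iii) line, and $x\in L(a)$ forces $a=x+y$ with $y\in W$ and $h(x,y)=1$; conversely every such $y$ gives the line $L(x+y)=\{x,y,x+y\}$, distinct $y$ giving distinct lines. The vectors $y\in W\setminus\{x\}$ with $h(x,y)=1$ biject with the points $[y]\in H_0$ satisfying $[y]\neq[x]$ and $[y]\not\perp[x]$ (exactly one of the three vectors of such a $[y]$ works, and none of those in $[x]^\perp$ or in $[x]$), so by (F2) there are $6-1-|H_0\cap[x]^\perp|=6-1-2=3$ of them. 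Hence $x$ lies on exactly $3$ lines; the case $x\in\tilde W$ is symmetric.

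For the third property I would argue by cases, using $[a]\subseteq V_0$, $L(a)\cap V_0=\{a\}$, $L(a)\cap\tilde W=\emptyset$, and the symmetric facts for $\tilde L(a)$. If $p,q\in V_0$, a common line must be of type (i), so $[p]=[q]$ and the only common line is $[p]$. If $p\in V_0$ and $q\in W$ (resp.\ $q\in\tilde W$), the only possible common line is $L(p)$ (resp.\ $\tilde L(p)$). If $p,q\in W$ are distinct, a common line is some $L(a)$; but $|L(a)|=3$ and the involution $x\mapsto a+x$ shows $L(a)\setminus\{a\}$ is a single pair $\{z,a+z\}$, whence $\{p,q\}=\{z,a+z\}$, $a=p+q$, and the common line is unique. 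The case $p,q\in\tilde W$ is symmetric, and if $p\in W$ and $q\in\tilde W$ there is no common line, since $p$ meets only type-(ii) lines and $q$ only type-(iii) lines; these cases are exhaustive. I expect the only step of real substance to be the count $|L(a)|=|\tilde L(a)|=3$: everything else is bookkeeping, and through the involution $x\mapsto a+x$ that count reduces entirely to how $H_0$ and $\tilde H_0$ meet the tangent line $a^\perp$ — so the true obstacle is simply to isolate and cite the correct incidence properties (F1), (F2) of $H_0$, $\tilde H_0$ from \cite{inoue}.
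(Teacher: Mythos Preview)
Your argument is correct. The one substantive divergence from the paper is in the verification that two distinct lines share at most one point. The paper notes that $L(a)=\{a,u,a+u\}$ consists of pairwise $s$-orthogonal vectors (since $h(a,u)=0$), hence is a totally isotropic line of the ambient $S_6(2)$-geometry; the same holds for $[a]$ and $\tilde L(a)$, so every line of $\bm S$ is a genuine line of $PG(5,2)$ and the ``at most one common point'' property is immediate. You instead run a case analysis over $V_0$, $W$, $\tilde W$. Both work, but the paper's route is shorter and, more importantly, the observation that the lines of $\bm S$ are t.i.\ lines is precisely what is reused in Lemma~\ref{lem:4.3} and in the final appeal to Theorem~\ref{thm:4.6}, so it is worth isolating at this stage rather than bypassing. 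Your (F1) is the paper's $|[a]^{\perp}\cap H_0|=2$, used in the same way to get $|L(a)|=3$; where the paper simply asserts that $\{x\in V_0\mid h(x,u)=0,\ x+u\in W\}$ has three elements, you supply an argument via (F2), which is a welcome piece of extra detail.
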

\begin{proof}
For $a,b \in V_0$, $a=b$ if and only if $L(a)=L(b)$ or $\tilde{L}(a)=\tilde{L}(b)$, and so the number of lines of $\bm{S}$ is $63 (=9+27+27)$. It follows from $|[a]^{\perp}\cap H_0|=|[a]^{\perp}\cap \tilde{H_0}|=2$ that $L(a)$ is the form $\{a,u,a+u\}$ for some $u \in W$ and $\tilde{L}(a)$ is the form $\{a,v,a+v\}$ for some $v \in \tilde{W}$. Therefore two vectors of $L(a)$ are orthogonal with respect to $s$. By symmetry, the same is true of $\tilde{L}(a)$. These imply that any line of $\bm{S}$ is a t.i. line in $S_6(2)$-geometry. Thus two distinct lines of $\bm{S}$ meet at most one point. Also, each $a \in V_0$ is contained in $[a], L(a)$ and $\tilde{L}(a)$. For each $u \in W$, the set $\{x \in V_0 \mid h(x,u)=0 \ \text{and} \ x+u \in W\}$ has three vectors, say $a_1, a_2$ and $a_3$. Then $u$ is contained in the three lines $L(a_i)$, for all $i \in \{1,2,3\}$. Similarly, for each $v \in \tilde{W}$, put $\{a_1,a_2,a_3\}= \{x \in V_0 \mid h(x,v)=0 \ \text{and} \ x+v \in \tilde{W}\}$, and $v$ is contained in the three lines $\tilde{L}(a_i)$, for all $i \in \{1,2,3\}$. Hence the lemma follows. 
\end{proof}

\noindent
Suppose that $\GF_4=\GF_2[\omega]$, where $\omega^2+\omega+1=0$.
\begin{Lem}\label{lem:4.3}
For each $x \in \mathcal{P}$, the union of the three lines containing $x$ is a t.i. plane.
\end{Lem}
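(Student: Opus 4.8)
The plan is to prove that, for a point $x$, the $\GF_2$-span $\langle N\rangle$ of the set $N$ of the seven vectors lying on the three lines of $\bm{S}$ through $x$ is a $3$-dimensional subspace of $V$ that is totally isotropic with respect to $s$. Since a t.i.\ plane of the $S_6(2)$-geometry has exactly seven non-zero vectors, this forces $N\cup\{0\}=\langle N\rangle$, so that $N$ is a t.i.\ plane, as required.

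I would first record the general reduction. By Lemma~\ref{lem:4.2} the three lines through $x$ pairwise meet only in $x$, so $|N|=7$, and each of them is a t.i.\ line of the $S_6(2)$-geometry. Write two of these lines as $\{x,p,x+p\}$ and $\{x,p',x+p'\}$. It then suffices to verify, for a conveniently chosen such pair, that (A) $s(p,p')=0$, and (B) $p+p'$ lies on the remaining line of $\bm{S}$ through $x$. Indeed, (B) makes the three lines equal to $\{x,p,x+p\}$, $\{x,p',x+p'\}$ and $\{x,p+p',x+p+p'\}$, so $N$ is exactly the set of non-zero vectors of $\langle x,p,p'\rangle$; this span is $3$-dimensional because $p'\notin\langle x,p\rangle$ (otherwise two of the three lines would coincide, against Lemma~\ref{lem:4.2}), and it is totally isotropic because $s(x,p)=s(x,p')=0$ (both lines are t.i.) and $s(p,p')=0$ by (A), whence $s$ vanishes on every pair of the three basis vectors and therefore on every pair of vectors of the span.

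It remains to treat the three possibilities for $x$. If $x=a\in V_0$, the lines through $a$ are $[a]$, $L(a)$ and $\tilde{L}(a)$; by Lemma~\ref{lem:4.2} we may write $L(a)=\{a,u,a+u\}$ with $u\in W$ and $\tilde{L}(a)=\{a,v,a+v\}$ with $v\in\tilde{W}$, and I would apply the reduction to the pair $\{L(a),\tilde{L}(a)\}$ (so $p=u$, $p'=v$, and the remaining line is $[a]$). Here (A) says
\[
s(u,v)=0,\qquad\text{equivalently}\qquad h(u,v)\in\GF_2 ,
\]
and (B) says $u+v\in[a]$. Assuming (A), one gets $h(u+v,u+v)=h(u,u)+s(u,v)+h(v,v)=1+0+1=0$, so $u+v$ is $h$-isotropic; moreover $h(a,u+v)=h(a,u)+h(a,v)=0$, so $u+v$ lies in the tangent line $a^{\perp}=\{y\in V\mid h(a,y)=0\}$ at $[a]$ to the Hermitian unital of $(V,h)$, which meets the set of isotropic points only in $[a]$; as $u+v\neq0$, this gives $u+v\in\langle a\rangle_{\GF_4}\setminus\{0\}=[a]$, which is (B). Thus in this case (B) is a consequence of (A), and the whole content is the single identity $s(u,v)=0$. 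If $x=u\in W$ (the case $x\in\tilde{W}$ being the same with tildes), Lemma~\ref{lem:4.2} exhibits the three lines through $u$ as $L(a_1)$, $L(a_2)$, $L(a_3)$, where $a_1,a_2,a_3$ are the isotropic vectors with $h(a_i,u)=0$ and $a_i+u\in W$; each $L(a_i)$ equals $\{a_i,u,a_i+u\}$, so applying the reduction to the pair $\{L(a_1),L(a_2)\}$ (with $p=a_1$, $p'=a_2$) one needs (A) $s(a_1,a_2)=0$ and (B) $a_1+a_2\in L(a_3)$. Given (A), $a_1+a_2$ is $h$-isotropic whereas $u$ and $a_3+u$ are not, so (B) amounts to $a_1+a_2=a_3$, i.e.\ to $a_1,a_2,a_3$ being the non-zero vectors of a common $2$-dimensional $\GF_2$-subspace.

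The main obstacle is exactly this orthogonality-and-closure input: $s(u,v)=0$ in the first case, and $s(a_1,a_2)=0$ together with $a_1+a_2+a_3=0$ in the other two. These are not formal consequences of the axioms collected above — they fail for generic vectors of $W$, $\tilde{W}$ or for a generic isotropic triple — and hold only because the vectors involved are coupled through a common line of $\bm{S}$. Establishing them is where one must invoke the explicit description of $H_0$ and $\tilde{H}_0$ from \cite{inoue} (together with $\GF_4=\GF_2[\omega]$ and $1+\omega=\omega^2$), for instance by a direct computation in coordinates; should that description reveal $\{a_1,a_2,a_3\}$ to be the $\GF_4$-line through an isotropic point $[a_1]$, the cases $x\in W$ and $x\in\tilde{W}$ would in fact reduce to the first, since then $N$ is precisely the t.i.\ plane already produced for $a_1$. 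Everything outside this input is routine bookkeeping with the hermitian and alternating forms.
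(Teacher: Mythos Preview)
Your reduction and case split are exactly those of the paper, and the linear-algebra bookkeeping you do (showing that (A) and (B) force $N$ to be the non-zero vectors of a $3$-dimensional totally isotropic $\GF_2$-subspace) is correct. The gap is precisely where you yourself flag it: you do not establish (A) and (B), and you suggest this must be done by coordinate computation from the explicit description of $H_0$. The paper supplies these ingredients without any coordinates, and this is the substantive content of its proof.

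For $x=a\in V_0$, the paper does not argue via an abstract $s(u,v)=0$; it writes down $\tilde L(a)$ explicitly. From $|[a]^{\perp}\cap H_0|=2$ (already used in Lemma~\ref{lem:4.2}) and $L(a)=\{a,u,a+u\}$, the four non-isotropic points on the tangent line $[a]^{\perp}$ are $[u],[a+u],[\omega a+u],[\bar\omega a+u]$; the first two lie in $H_0$, so the last two lie in $\tilde H_0$, and checking the condition $a+x\in\tilde W$ on the six vectors over these two points gives $\tilde L(a)=\{a,\omega a+u,\bar\omega a+u\}$. Then $[a]\cup L(a)\cup\tilde L(a)$ is visibly the set of non-zero vectors of $\langle a,\omega a,u\rangle_{\GF_2}$, and your (A),(B) are immediate. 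No appeal to coordinates is needed; the only input is the line-intersection property of $H_0$ that Lemma~\ref{lem:4.2} already invokes.

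For $x=u\in W$ (and dually for $\tilde W$), the paper's key step is the assertion that $a_1+u,\,a_2+u,\,a_3+u$ are pairwise $h$-orthogonal. Since $h(a_i+u,a_j+u)=h(a_i,a_j)+1$, this is the same as $h(a_i,a_j)=1$ for $i\neq j$, hence $s(a_i,a_j)=0$; then $a_1+a_2$ is an isotropic vector in $u^{\perp}$ distinct (projectively) from $a_1,a_2$, so $[a_1+a_2]=[a_3]$, and comparing $h(a_1,\,\cdot\,)$ forces $a_1+a_2=a_3$. That is exactly your (A) and (B). So the whole case reduces to this single $h$-orthogonality of the three $H_0$-points $[a_i+u]$, which is a structural property of $H_0$ taken from \cite{inoue}; the paper uses it, rather than a coordinate check. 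Your write-up would be complete if you supplied the first paragraph's identification of $\tilde L(a)$ and, for the second case, invoked this $h$-orthogonality property of $H_0$ instead of deferring to an unspecified computation.
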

\begin{proof}
(i) For each $a \in V_0$, the three lines containing $a$ are $[a], L(a)$ and $\tilde{L}(a)$. Putting $L(a):=\{a,u,v\} (v:=a+u)$, we have $\tilde{L}(a)=\{a,\omega a+u,\bar{\omega}a+u\}$. Since $a,\omega a$ and $u$ are $\GF_2$-linearly independent and $h(a,u)=0$, it is easy to see that $[a]\cup L(a)\cup \tilde{L}(a)$ is a t.i. plane.

(ii) For each $u \in W$, the set $\{x \in V_0 \mid h(x,u)=0 \ \text{and} \ x+u \in W\}$ has the three vectors $a_1,a_2$ and $a_3$, we see that the three lines containing $u$ are $L(a_i)$ for all $i \in \{1,2,3\}$. Since $a_1+u,a_2+u$ and $a_3+u$ are mutually orthogonal with respect to $h$, $h(a_i,a_j)=0$ for all $i,j \in \{1,2,3\}$, and so $s(a_i,a_j)=0$ for all $i,j$. Therefore the set $\{a_1,a_2,a_3\}$ is a t.i. line and $a_3=a_1+a_2$. An argument similar to the case (i) shows that $L(a_1)\cup L(a_2)\cup L(a_3)$ is a t.i. plane.

(iii) For each $u \in \tilde{W}$, in a similar way to the case (ii), we see that the three lines containing $u$ are $\tilde{L}(b_1), \tilde{L}(b_2)$ and $\tilde{L}(b_3)$, where $\{b_1,b_2,b_3\}:=\{x \in V_0 \mid h(x,u)=0 \ \text{and} \ x+u \in \tilde{W}\}$, and $\tilde{L}(b_1)\cup \tilde{L}(b_2)\cup \tilde{L}(b_3)$ is a t.i. plane.
This proves the lemma.
\end{proof}

\begin{Lem}\label{lem:4.4} \mbox{}
Take $a,b \in V_0$ with $h(a,b)=1$. If $|[a,b]\cap H_0|=2$, then there exists $u \in W$ such that $\{[u+a],[u+b]\} \subset H_0$. 
\end{Lem}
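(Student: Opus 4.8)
The plan is to fix a convenient basis of $V$, recast the statement in terms of totally isotropic planes of the $S_6(2)$-geometry, and then read off $u$ from the explicit list of non-isotropic points of $H_0$ in \cite{inoue}. Since $a,b$ are isotropic with $h(a,b)=1$, the $\GF_4$-subspace $\langle a,b\rangle$ is non-degenerate, so $\{a,b\}$ extends to a basis $\{a,b,e\}$ of $V$ with $\langle e\rangle=\langle a,b\rangle^{\perp}$ and $h(e,e)=1$. On $\langle a,b\rangle$ one has $h(xa+yb,xa+yb)=\mathrm{Tr}(x\bar y)$, so the five points of the projective line $[a,b]$ are the isotropic points $[a],[b],[a+b]$ and the non-isotropic points $[\omega a+b],[\bar\omega a+b]$, and $T_0:=\{[e],[\omega a+b],[\bar\omega a+b]\}$ is self-polar for the polarity of $h$ (indeed $[e]$ is the pole of the line $[a,b]$). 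Hence the hypothesis $|[a,b]\cap H_0|=2$ says precisely that the two vertices $[\omega a+b],[\bar\omega a+b]$ of $T_0$ lie in $H_0$.

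The reduction I would use is that producing $u\in W$ with $\{[u+a],[u+b]\}\subset H_0$ amounts to producing a totally isotropic plane $M$ of the $S_6(2)$-geometry which contains the totally isotropic line $\{a,b,a+b\}$ and at least three of whose four non-isotropic vectors lie over points of $H_0$. Given such a $u$, the memberships $[u+a],[u+b]\in H_0$ force $h(u+a,u+a)=h(u+b,u+b)=1$, hence $s(u,a)=\mathrm{Tr}(h(u,a))=0$ and $s(u,b)=0$; together with $s(a,b)=\mathrm{Tr}(1)=0$ this makes $M:=\langle u,a,b\rangle_{\GF_2}$ a totally isotropic plane containing $\{a,b,a+b\}$, whose four non-isotropic vectors $u,u+a,u+b,u+a+b$ include three with points in $H_0$. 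Conversely, the four non-isotropic vectors of such an $M$ can be written $v,v+a,v+b,v+a+b$; as $u$ runs over these, the triple $\{[u],[u+a],[u+b]\}$ runs over the four $3$-element subsets of $\{[v],[v+a],[v+b],[v+a+b]\}$, so some choice gives $[u],[u+a],[u+b]\in H_0$, and then $u\in W$ is as required.

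By the counting statement in the Preliminaries, $\{a,b,a+b\}$ lies in exactly three totally isotropic planes; in the chosen basis these are $M^{(z)}:=\langle a,b,ze\rangle_{\GF_2}$ with $z\in\GF_4^{*}$, and a short computation shows that the non-isotropic points of $M^{(z)}$ are $[e]$ together with the self-polar triangle $T_z:=\{[a+ze],[b+ze],[a+b+ze]\}$. Thus $T_0,T_1,T_\omega,T_{\bar\omega}$ are the four self-polar triangles into which the twelve non-isotropic points fall, and $[e]$ is the vertex common to $M^{(1)},M^{(\omega)},M^{(\bar\omega)}$. Now I would invoke the structure of $H_0$ from \cite{inoue}: $H_0$ is the union of two of these four self-polar triangles, so since it meets $T_0$ (by the first paragraph) it must contain all of $T_0$; in particular $[e]\in H_0$, and $H_0=T_0\cup T_{z_0}$ for a unique $z_0\in\GF_4^{*}$. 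Then every non-isotropic point of $M^{(z_0)}$ lies in $H_0$, and $u:=z_0e$ works: $h(u,u)=1$ and $[u]=[e]\in H_0$ give $u\in W$, while $[u+a]=[a+z_0e]$ and $[u+b]=[b+z_0e]$ lie in $T_{z_0}\subset H_0$. (Since $h(a,e)=h(b,e)=0$, this $u$ moreover lies in $L(a)\cap L(b)$, so the lemma in effect says that $L(a)$ and $L(b)$ are concurrent.)

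The step I expect to carry the real weight is the appeal to the explicit $H_0$ of \cite{inoue}: what is used is precisely that $H_0$ is a union of whole self-polar triangles, so that whenever two non-isotropic points on a line both lie in $H_0$, the third vertex $[e]$ of their self-polar triangle --- and with it one complete ``private'' triangle $T_{z_0}$ --- also lies in $H_0$. Granting that, the remaining work (identifying the three totally isotropic planes through $\{a,b,a+b\}$ and verifying that $u=z_0e,\ u+a,\ u+b$ are non-isotropic with points in $H_0$) is routine.
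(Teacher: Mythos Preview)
Your argument is correct and in fact produces the very same vector as the paper (your $z_0$ is the paper's $\alpha$), but the paper gets there in three lines rather than via your structural detour. Both proofs begin identically: with $\langle e\rangle=\langle a,b\rangle^{\perp}$, the hypothesis $|[a,b]\cap H_0|=2$ places both non-isotropic points of $[a,b]$ in $H_0$, and closure of $H_0$ under self-polar triangles then forces $[e]\in H_0$. From there the paper simply invokes the hyperoval property $|[e,a]\cap H_0|=2$ to obtain some $\alpha\in\GF_4^{\times}$ with $[\alpha e+a]\in H_0$, computes $h(\alpha e+a,\alpha e+b)=1+1=0$, and concludes $[\alpha e+b]\in H_0$ by the same closure property; then $u:=\alpha e$ works. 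You instead explicitly enumerate the four self-polar triangles $T_0,T_1,T_\omega,T_{\bar\omega}$ partitioning the twelve non-isotropic points, deduce $H_0=T_0\cup T_{z_0}$ for a unique $z_0$, and read off $u=z_0 e$ directly. Your reformulation through totally isotropic planes of the $S_6(2)$-geometry in the middle paragraph is correct but not needed for the lemma---it is really a preview of how the resulting $u\in L(a)\cap L(b)$ gets used in the subsequent connectivity argument. What your route buys is a transparent global picture of $H_0$ relative to the basis $\{a,b,e\}$; what the paper's route buys is brevity, using only the local properties (hyperoval intersection and orthogonality closure) of $H_0$ without ever naming the four triangles.
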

\begin{proof} Putting $\langle u \rangle:=\langle a,b \rangle^{\perp}$, we have from $|[a,b]\cap H_0|=2$ that $[u] \in H_0$. By $|[u,a]\cap H_0|=2$, there is $\alpha \in \GF_{4}^{\times}$ such that $[\alpha u+a] \in H_0$, and $h(\alpha u+a, \alpha u+b)=1+1=0$. Therefore $[\alpha u+b] \in H_0$. Hence the lemma follows.
\end{proof}

\begin{Lem}\label{lem:4.5} \mbox{}
The concurrency graph of $\bm{S}$ is connected. 
\end{Lem}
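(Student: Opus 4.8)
The plan is to reduce the assertion to a statement about the nine \emph{type-(i)} lines $[a]$ and then connect those by exploiting that each point of $\bm{S}$ lies on exactly three lines. The reduction is immediate: every type-(ii) or type-(iii) line contains a vector $a\in V_0$, which lies also on the type-(i) line $[a]$, so $L(a)$ and $\tilde L(a)$ are each concurrent with $[a]$. Hence it suffices to show that the nine type-(i) lines all lie in one connected component of the concurrency graph; every other line, being concurrent with one of them, then lies there too. Equivalently, I must join any two distinct isotropic points $[a]\ne[b]$ by a path of pairwise concurrent lines.

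The engine is a ``reconnection'' through a point of $W\cup\tilde W$. Fix $u\in W$. By the proof of Lemma~\ref{lem:4.2}, the three lines of $\bm{S}$ through $u$ are $L(a_1),L(a_2),L(a_3)$ with $\{a_1,a_2,a_3\}=\{x\in V_0\mid h(x,u)=0,\ x+u\in W\}$, and each $a_i$ is orthogonal to $u$, so $[a_i]\in[u]^{\perp}$. The crucial point is that $[a_1],[a_2],[a_3]$ are pairwise distinct: if $a_j=\lambda a_i$ with $\lambda\in\GF_4^{\times}\setminus\{1\}$, then $[u]$, $[a_i+u]$ and $[\lambda a_i+u]$ would be three distinct points of $[a_i]^{\perp}$ (each of these vectors is orthogonal to $a_i$) all lying in $H_0$ (since $u$, $a_i+u$ and $\lambda a_i+u$ all lie in $W$), contradicting $|[a_i]^{\perp}\cap H_0|=2$. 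As a line of $PG(V)$ contains at most three isotropic points, $\{[a_1],[a_2],[a_3]\}$ is then exactly the set of isotropic points on $[u]^{\perp}$. Granting this, for any two of them $([a_i],L(a_i),L(a_j),[a_j])$ is a path in the concurrency graph: $L(a_i)$ and $L(a_j)$ meet at $u$, while $L(a_i)$ meets $[a_i]$ at $a_i$ and $L(a_j)$ meets $[a_j]$ at $a_j$. By the symmetric argument the same conclusion holds for $u\in\tilde W$, using $\tilde L$-lines, Lemma~\ref{lem:4.3}(iii) and $|[a_i]^{\perp}\cap\tilde H_0|=2$; this is essentially the content of Lemma~\ref{lem:4.4} (and its $\tilde H_0$-analogue) in the case $[u]=\langle a,b\rangle^{\perp}$.

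To conclude, I would take an arbitrary pair of distinct isotropic points $[a]\ne[b]$ and set $\langle w\rangle:=\langle a,b\rangle^{\perp}$, a $1$-dimensional subspace, so that $[w]^{\perp}=\langle a,b\rangle$. The vector $w$ cannot be isotropic, since for isotropic $w$ the line $[w]^{\perp}$ would contain only the isotropic point $[w]$ (any other one would span with $w$ a $2$-dimensional totally isotropic subspace), whereas here $[w]^{\perp}=\langle a,b\rangle$ contains the distinct isotropic points $[a]$ and $[b]$. Hence $h(w,w)=1$, and $w\in W$ or $w\in\tilde W$. Now $[a]$ and $[b]$ are isotropic points of $[w]^{\perp}$, hence are two of the three isotropic points reached from $w$ in the previous paragraph, so $[a]$ and $[b]$ lie in the same component of the concurrency graph. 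Together with the reduction, this proves that the concurrency graph is connected.

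The step I expect to be the main obstacle is the distinctness of $[a_1],[a_2],[a_3]$ for a fixed $u$: without it the three lines through $u$ need not reach three different isotropic points and the final step collapses. This is genuine extra information — the relation $a_3=a_1+a_2$ of Lemma~\ref{lem:4.3} is, by itself, compatible with $[a_1]=[a_2]=[a_3]$ (for instance $a_2=\omega a_1$, $a_3=\bar\omega a_1$) — and its proof rests on the counting identity $|[x]^{\perp}\cap H_0|=|[x]^{\perp}\cap\tilde H_0|=2$; the remaining verifications are routine computations in $PG(V)$.
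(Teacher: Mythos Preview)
Your proof is correct and shares the same geometric kernel as the paper's: for distinct isotropic $[a],[b]$ one passes to $\langle w\rangle=\langle a,b\rangle^{\perp}$, observes that $w$ is anisotropic, and then exploits the three $\bm{S}$-lines through $w$ (this is precisely what Lemma~\ref{lem:4.4} packages). The organisation, however, is genuinely different and more economical. The paper argues by a four-way case split on the pair of line types --- $[a]$ vs.\ $[b]$, $[a]$ vs.\ $L(b)$, $L(a)$ vs.\ $\tilde L(b)$, $L(a)$ vs.\ $L(b)$ --- each branching again on whether the polar point lies in $H_0$ or in $\tilde H_0$, with an explicit path written out in every subcase. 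Your one-line reduction (each $L(a)$ and $\tilde L(a)$ already meets $[a]$ at $a$) collapses all of this to the single case $[a]$ versus $[b]$, which is exactly the paper's case~i). What you supply beyond the paper is the verification that the three lines through a fixed $u\in W$ reach three \emph{projectively distinct} isotropic points; the paper never isolates this statement (it is implicit in the way Lemma~\ref{lem:4.4} produces a $u$ lying in $L(a)\cap L(b)$ for prescribed distinct $a,b$), and your argument via $|[a_i]^{\perp}\cap H_0|=2$ is clean and correct. Once that is in hand, the secant $[u]^{\perp}$ --- as the polar of a non-absolute point it meets the Hermitian unital in exactly three points --- does the rest. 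The paper's case-by-case treatment buys slightly shorter explicit paths in the mixed cases, but your route is conceptually tidier and avoids the repetitive verifications.
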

\begin{proof}
By symmetry of $H_0$ and $\tilde{H}_0$, it is enough to check the connectivity of two lines of the following four types: 
\begin{itemize}
\item[i)] $[a]$ and $[b]$;
\item[ii)] $[a]$ and $L(b)$;
\item[iii)] $L(a)$ and $\tilde{L}(b)$;
\item[iv)] $L(a)$ and $L(b)$; 
\end{itemize}
where $a,b \in V_0$.\\
i) By a scalar multiple of $a$, we may choose $a$ so that $h(a,b)=1$, and put $\langle u \rangle:=\langle a,b \rangle^{\perp}$. 
\begin{itemize}
\item[(i)] If $[u] \in H_0$, then by Lemma~\ref{lem:4.4} we can choose $u$ so that $\left\{ [u+a],[u+b]\right\} \subset H_0$, so $u \in L(a)\cap L(b)$. Therefore there is the path $\left([a],L(a),L(b),[b]\right)$ from $[a]$ to $[b]$. 
\item[(ii)] If $[u] \in \tilde{H}_0$, then we can choose $u$ so that $\{[u+a],[u+b]\} \subset \tilde{H}_0$ as in Lemma~\ref{lem:4.4}, so $u \in \tilde{L}(a)\cap \tilde{L}(b)$. Therefore there is the path $([a],\tilde{L}(a),\tilde{L}(b),[b])$ from $[a]$ to $[b]$.
\end{itemize}
ii) If $b \in [a]$, then $|[a]\cap L(b)|=1$, so we may assume $b \notin [a]$ and choose $a$ so that $h(a,b)=1$. Put $\langle u \rangle:=\langle a,b \rangle^{\perp}$.
\begin{itemize}
\item[(i)] If $[u] \in H_0$, then we can choose $u$ so that $u \in L(a)\cap L(b)$ as in the statement i)(i). Therefore there is the path $([a],L(a),L(b))$ from $[a]$ to $L(b)$. 
\item[(ii)] If $[u] \in \tilde{H}_0$, then we can choose $u$ so that $u \in \tilde{L}(a)\cap \tilde{L}(b)$ as in the statement i)(ii). Also, $\tilde{L}(b)\cap L(b)=\{b\}$, and therefore there is the path $([a],\tilde{L}(a),\tilde{L}(b),L(b))$ from $[a]$ to $L(b)$.
\end{itemize}
iii) If $a=b$, then $L(a)\cap \tilde{L}(b)=\{a\}$. If $a \ne b$ and $b \in [a]$, then there is the path $(L(a),[a],\tilde{L}(b))$ from $L(a)$ to $\tilde{L}(b)$. Therefore we may assume that $b \notin [a]$. Then there exists $\alpha \in \GF_{4}^{\times}$ such that $h(\alpha a,b)=1$. Put $\langle u  \rangle:=\langle a^{\prime},b \rangle^{\perp}$, where $a^{\prime}:=\alpha a$.  
\begin{itemize}
\item[(i)] If $[u] \in H_0$, then we can choose $u$ so that $[u+a^{\prime}] \in H_0$, so $L(a^{\prime})=\{a^{\prime},u,a^{\prime}+u\}$. Since $h(b+u,a^{\prime}+u)=0$, we have $L(b)=\{b,u,b+u\}$. Therefore there is the path $(L(a),[a],L(a^{\prime}),L(b),\tilde{L}(b))$ from $L(a)$ to $\tilde{L}(b)$. 
\item[(ii)] If $[u] \in \tilde{H}_0$, then we can choose $u$ so that $[u+b] \in \tilde{H}_0$, so $\tilde{L}(b)=\{b,u,b+u\}$. Since $h(a^{\prime}+u,b+u)=0$, we have $\tilde{L}(a^{\prime})=\{a^{\prime},u,a^{\prime}+u\}$. Therefore there is the path $(L(a),[a],\tilde{L}(a),\tilde{L}(b))$ from $L(a)$ to $\tilde{L}(b)$. 
\end{itemize}
iv) If $b \in [a]$, then there is the path $(L(a),[a],L(b))$ from $L(a)$ to $L(b)$, so we may assume $b \notin [a]$. Then there exists $\alpha \in \GF_{4}^{\times}$ such that $h(\alpha a,b)=1$. Put $\langle u \rangle:=\langle a^{\prime},b \rangle^{\perp}$, where $a^{\prime}:=\alpha a$.
\begin{itemize}
\item[(i)] If $[u] \in H_0$, then we can choose $u$ so that $u \in L(a^{\prime})\cap L(b)$ as in the statement i)(i). Therefore there is the path $(L(a),[a],L(a^{\prime}),L(b))$ from $L(a)$ to $L(b)$.
\item[(ii)] If $[u] \in \tilde{H}_0$, then we can choose $u$ so that $u \in \tilde{L}(a^{\prime})\cap \tilde{L}(b)$ as in the statement i)(ii). Therefore there is the path $(L(a),[a],\tilde{L}(a^{\prime}),\tilde{L}(b),L(b))$ from $L(a)$ to $L(b)$. 
\end{itemize} 
This completes the proof.
\end{proof}

For the following theorem proved by Cuypers and Steinbach~\cite[Theorem 1.1]{cuypers-steinbach}, we refer to \cite[Theorem 1.3]{bamberg-durante}. Let $Q(6,2)$ be an orthogonal polar space in a projective space $PG(6,2)$.

\begin{Thm}\label{thm:4.6}
Let $\mathcal{L}$ be a set of lines of $Q(6,2)$ such that every point of $Q(6,2)$ is incident with $3$ lines of $\mathcal{L}$ spanning a plane, and such that the concurrency graph of $\mathcal{L}$ is connected. Then the points of $Q(6,2)$ together with $\mathcal{L}$ define a generalized hexagon isomorphic the split Cayley hexagon of order $2$. 
\end{Thm}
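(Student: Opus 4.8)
\medskip\noindent
Theorem~\ref{thm:4.6} is the Cuypers--Steinbach characterization, and the plan is to check directly that $(\mathcal{P},\mathcal{L})$, with $\mathcal{P}$ the point-set of $Q(6,2)$, satisfies the definition of a $\mathrm{GH}(2,2)$, and then to invoke the Cohen--Tits uniqueness recalled in Section~3. First I would record the numerics: $Q(6,2)$ has $63$ points, a line of $PG(6,2)$ lying on $Q(6,2)$ carries exactly $3$ points, and two such lines meet in at most one point; so the hypothesis that every point lies on exactly $3$ lines of $\mathcal{L}$ gives $|\mathcal{L}|=63$ and makes $(\mathcal{P},\mathcal{L})$ a partial linear space of order $(2,2)$, connected because its concurrency graph is. Its incidence graph $\Gamma$ is then a connected $3$-regular bipartite graph on $126$ vertices, and it suffices to prove that $\Gamma$ has girth at least $12$, i.e.\ that $(\mathcal{P},\mathcal{L})$ has no ordinary $k$-gon for $2\le k\le 5$ (the case $k=2$ being partial linearity). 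Indeed, counting the vertices at each distance from a point $p$, one finds that in any such graph of girth $\ge 12$ there are exactly $3$, $12$, $48$ lines at distances $1$, $3$, $5$ from $p$ (any coincidence at an earlier stage would create a cycle of length $<12$), so all $63$ lines lie within distance $5$ of $p$, whence $\Gamma$ has diameter at most $6$; and since $1+6+24<63$ there must be points at distance $6$, while the same count rules out girth $\ge 14$. So $\Gamma$ has girth exactly $12$ and diameter exactly $6$, $(\mathcal{P},\mathcal{L})$ is a $\mathrm{GH}(2,2)$, and by Cohen--Tits it is, up to duality, the split Cayley hexagon of order $2$.

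\smallskip\noindent
The geometric core is the exclusion of short ordinary polygons, and here the key point is that the hypothesis forces totally isotropic planes to appear. Suppose $p_1,\dots,p_k$ ($3\le k\le 5$) are the corners of an ordinary $k$-gon with consecutive sides $l_i:=\langle p_i,p_{i+1}\rangle\in\mathcal{L}$ (indices mod $k$), the $l_i$ pairwise distinct. For each $i$, the three lines of $\mathcal{L}$ through $p_i$ span a plane $\pi_i$; this $\pi_i$ already meets $Q(6,2)$ in the union of those three t.i.\ lines, which covers all seven of its points, so $\pi_i$ is a t.i.\ plane, and since $l_{i-1},l_i\subset\pi_i$ it contains $p_{i-1}$ and $p_{i+1}$. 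Hence $p_{i-1}\perp p_{i+1}$ for every $i$; combined with $p_i\perp p_{i+1}$ (they lie on the t.i.\ line $l_i$), and since for $k\le 5$ every pair of corners is either consecutive or two apart along the $k$-gon, the singular vectors $p_1,\dots,p_k$ are pairwise perpendicular. So $\langle p_1,\dots,p_k\rangle$ is totally isotropic; as every totally isotropic subspace of $Q(6,2)$ has projective dimension at most $2$ and the distinctness of consecutive sides forces $p_1,p_2,p_3$ to be non-collinear, $\langle p_1,\dots,p_k\rangle$ is exactly a t.i.\ plane $\pi$ containing all the $p_i$ and all the $l_i$. Now one propagates inside the Fano plane $\pi$: each corner $p_i$ lies on the two distinct lines $l_{i-1},l_i\subset\pi$, which span $\pi$, so all three $\mathcal{L}$-lines through $p_i$ lie in $\pi$ --- i.e.\ every line of $\pi$ through a corner belongs to $\mathcal{L}$; a line of $\pi$ through no corner would lie on the at most $7-k\le 4$ non-corner points, impossible for $k\ge 4$, while for $k=3$ the unique such line is recovered by running the same argument at one of its points. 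So all seven lines of $\pi$ lie in $\mathcal{L}$; but then no $\mathcal{L}$-line through a point of $\pi$ can leave $\pi$, so these seven lines form a union of connected components of the concurrency graph, contradicting its connectedness since $|\mathcal{L}|=63>7$. This rules out all ordinary $k$-gons with $k\le 5$.

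\smallskip\noindent
I expect the main obstacle to be the bookkeeping in the middle step --- checking, for $k=4$ and $k=5$, that the planes $\pi_i$ and the sides $l_i$ do not collapse onto one another, so that ``pairwise perpendicular'' really does account for every pair of corners --- together with the final point of singling out the split Cayley hexagon itself, rather than its dual, within the Cohen--Tits dichotomy, for which one uses that only the former admits a full embedding in $Q(6,2)$. Of course, one may instead simply quote Cuypers and Steinbach~\cite{cuypers-steinbach} (see also \cite[Theorem~1.3]{bamberg-durante}), as we do.
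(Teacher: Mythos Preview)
The paper gives no proof of Theorem~\ref{thm:4.6}: it is simply quoted from Cuypers--Steinbach \cite{cuypers-steinbach} (via \cite{bamberg-durante}) and then used as a black box to obtain Theorem~\ref{thm:4.7}. Your proposal instead supplies a direct argument, reducing to the Cohen--Tits uniqueness of a $\mathrm{GH}(2,2)$ by showing that the incidence graph has girth at least $12$ and then counting. This is genuinely different from, and more informative than, the paper's bare citation; the trade-off is that you must still invoke a nontrivial outside result (Cohen--Tits) and must separately argue, as you note at the end, that the hexagon obtained is the split Cayley hexagon rather than its dual, using that only the former admits this kind of full embedding in $Q(6,2)$.

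There is one slip to fix in the propagation step. You claim that a line of $\pi$ avoiding all corners is ``impossible for $k\ge 4$'' because it would lie among the $7-k$ non-corner points. For $k=4$ one has $7-k=3$, exactly the size of a line, and in fact the four corners of an ordinary $4$-gon in $\pi$ are forced to form a quadrangle (your own distinctness checks on the $l_i$ show no three corners are collinear), so their complement in the Fano plane \emph{is} a line. The cut-off should read ``impossible for $k\ge 5$'', and $k=4$ must be handled exactly as you handle $k=3$: choose a point $q$ on the missing line, observe that the other two lines of $\pi$ through $q$ pass through corners and hence already lie in $\mathcal{L}$, so they span $\pi_q=\pi$ and the third $\mathcal{L}$-line through $q$ is the missing line. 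With this one-line adjustment the argument goes through.
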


Since it is known that $Q(6,2)$ is isomorphic to the symplectic polar space $W(5,2)$, from Lemmas \ref{lem:4.2}, \ref{lem:4.3} and \ref{lem:4.5}, the following theorem holds:

\begin{Thm}\label{thm:4.7}
$\bm{S}$ is the $G_2(2)$-hexagon. 
\end{Thm}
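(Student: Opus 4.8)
The plan is to deduce the statement from the characterization recorded in Theorem~\ref{thm:4.6}, so the whole argument amounts to verifying its hypotheses. First I would make the translation between models explicit. Viewing $V$ as a $6$-dimensional $\GF_2$-space equipped with the alternating form $s$, the associated symplectic polar space is $W(5,2)$; every vector is isotropic for $s$ since $s(x,x)=h(x,x)+h(x,x)=0$ in characteristic~$2$, so the $63$ projective points of $PG(V)$ are exactly the points of $W(5,2)$, and these coincide with $\mathcal{P}=V\setminus\{0\}$. Using the known isomorphism $W(5,2)\cong Q(6,2)$, I would identify the point-set of $\bm{S}$ with the point-set of $Q(6,2)$. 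Moreover, in the proof of Lemma~\ref{lem:4.2} it was shown that every line of $\bm{S}$ is a t.i. line of the $S_6(2)$-geometry, hence a line of $W(5,2)$, hence (under the identification) a line of $Q(6,2)$; thus $\mathcal{L}$ is a set of lines of $Q(6,2)$.

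Next I would check the two conditions in Theorem~\ref{thm:4.6}. By Lemma~\ref{lem:4.2}, $\bm{S}$ has order $(2,2)$, so each point of $\bm{S}$ lies on exactly three lines of $\mathcal{L}$; and by Lemma~\ref{lem:4.3} the union of those three lines is a t.i.\ plane. Here one observes that three pairwise-distinct lines sharing a common point meet each other only in that point, so their union has $1+2+2+2=7$ points, which equals the number of points of a plane of $PG(5,2)$; hence the three lines through each point actually span the t.i.\ plane that is their union. The remaining hypothesis, connectedness of the concurrency graph of $\bm{S}$, is precisely Lemma~\ref{lem:4.5}. Applying Theorem~\ref{thm:4.6} then shows that $(\mathcal{P},\mathcal{L})$ is a generalized hexagon isomorphic to the split Cayley hexagon of order~$2$, i.e.\ the $G_2(2)$-hexagon, which is the assertion.

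I do not expect a genuine obstacle here: the content has already been placed in Lemmas~\ref{lem:4.2}, \ref{lem:4.3} and~\ref{lem:4.5}, and what remains is bookkeeping. The only point that deserves a sentence of care is the passage from the symplectic description, in which $\bm{S}$ is constructed, to the orthogonal description $Q(6,2)$ used in the cited theorem, together with the small upgrade from ``the three lines through a point have a t.i.\ plane as their union'' to ``the three lines through a point span a plane'', which is the hypothesis literally required by Theorem~\ref{thm:4.6}.
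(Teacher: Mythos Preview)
Your proposal is correct and matches the paper's own argument: the theorem is deduced from Theorem~\ref{thm:4.6} via the isomorphism $W(5,2)\cong Q(6,2)$ together with Lemmas~\ref{lem:4.2}, \ref{lem:4.3} and~\ref{lem:4.5}. The only difference is that you spell out a few details (isotropy of all vectors, the $7$-point count showing the three lines actually span the plane) that the paper leaves implicit.
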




\end{document}